\documentclass[11pt]{amsart}

\usepackage[T1]{fontenc}
\usepackage{amsmath,amssymb,amsthm,mathtools}
\usepackage[colorlinks=true,citecolor=blue,linkcolor=blue,urlcolor=blue]{hyperref}

\newtheorem{theorem}{Theorem}
\newtheorem{lemma}{Lemma}
\newtheorem{remark}{Remark}

\DeclareMathOperator{\Ann}{Ann}

\DeclareMathOperator{\Hilb}{Hilb}
\DeclareMathOperator{\edim}{edim}
\DeclareMathOperator{\Apolar}{Apolar}

\DeclareMathOperator{\Gr}{Gr}

\newcommand{\A}{\mathbb A}
\newcommand{\m}{\mathfrak m}
\newcommand{\kk}{k}

\title[Existence with smoothable $Q(0)$]
{Existence of a nonsmoothable local Gorenstein algebra with smoothable $Q(0)$}

\author{Ruoyu Wu}
\address{}
\email{}

\subjclass[2010]{Primary 13E10, 14C05; Secondary 13H10}
\keywords{Artinian Gorenstein algebra, Hilbert scheme of points, smoothable algebra, Macaulay inverse systems, symmetric decomposition}

\begin{document}

\begin{abstract}
We prove that there exists a local Artinian Gorenstein algebra $A$ which is not smoothable, although the first symmetric quotient $Q_A(0)$ in the symmetric decomposition of the associated graded algebra is smoothable.  The proof uses divided-power inverse systems and gives such algebras of length $31$ and embedding dimension $14$ over every algebraically closed field.
\end{abstract}

\maketitle

\section{Introduction}

Let $A$ be a local Artinian Gorenstein algebra over an algebraically closed field.  Iarrobino's symmetric decomposition writes the Hilbert function of the associated graded algebra of $A$ as a sum of symmetric Hilbert functions
\[
H(A)=\sum_{a\geq 0} H(Q_A(a)),
\]
where the $Q_A(a)$ are reflexive subquotients of $\Gr_{\mathfrak m_A}(A)$.  More precisely, in the notation of \cite{IarrobinoMaciasMarques}, $Q_A(a)=C_A(a)/C_A(a+1)$.  If $f=f_j+f_{j-1}+\cdots$ is a Macaulay dual generator of $A$, with $f_i$ homogeneous of degree $i$, then $Q_A(0)$ is a graded Artinian Gorenstein algebra whose dual generator is $f_j$; see \cite[Theorem~1.4]{IarrobinoMaciasMarques}.

In socle degree three, smoothability of this first symmetric quotient is strong enough to imply smoothability of the local algebra itself; this follows from the structure results of Casnati, Elias, Notari, and Rossi for Gorenstein local algebras with $\mathfrak m^4=0$ \cite{CasnatiEliasNotariRossi}.  One of the open problems in \cite[\S 2.5]{IarrobinoMaciasMarques} asks whether such behavior persists in higher socle degree, namely for a nonsmoothable local Gorenstein algebra $A$ such that $Q_A(0)$ is smoothable.  This note gives an affirmative answer, in the sense of an existence result by dimension count, and shows that the implication from the smoothability of $Q_A(0)$ to the smoothability of $A$ already fails in socle degree four.  The proof considers dual generators of the form
\[
F=Z^{[4]}+G(X_1,\ldots,X_{13}),
\]
where $G$ is a general cubic.  The top-degree term forces $Q_A(0)$ to be the curvilinear algebra $\kk[[z]]/(z^5)$, while a Hilbert-scheme dimension count shows that a general member of the family is nonsmoothable.  We do not exhibit a specific cubic; the result is nonconstructive in the final nonsmoothability step.  For related work on smoothability and Gorenstein loci in Hilbert schemes of points, see for instance \cite{CasnatiJelisiejewNotari}.

\begin{theorem}\label{thm:main}
Let $\kk$ be an algebraically closed field.  There exists a local Artinian Gorenstein $\kk$-algebra $A$ of length $31$ and embedding dimension $14$ such that $A$ is not smoothable, while $Q_A(0)$ is smoothable.
\end{theorem}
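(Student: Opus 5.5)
Set $n=13$ and work in the divided power ring in $Z,X_1,\dots,X_{13}$. For a cubic $G$ in the $X_i$, put $F=Z^{[4]}+G$ and $A_F=S/\Ann(F)$, where $S=\kk[[z,x_1,\dots,x_{13}]]$. The plan has four steps, done in this order.

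\textbf{Step 1: compute $A_F$ and its Hilbert function.} The partials of $F$ are:
\begin{itemize}
\item[(a)] the $Z^{[i]}$ for $i\le 4$;
\item[(b)] the first partials of $G$, which span the quadrics $\partial_{x_i}G$ and form a $13$-dimensional space when $G$ is general (concise);
\item[(c)] the second partials of $G$, which span all $13$ linear forms;
\item[(d)] the constant $1$.
\end{itemize}
The two top partials coincide up to scalar: after rescaling, $\partial_z^4F=1$ and any third partial of $G$ is a constant. So $\dim_\kk A_F=\dim S\circ F$. The pieces are $1$ constant, $13+1$ linear forms, $13+1$ quadrics and $1$ cubic $Z^{[3]}$, plus $F$ itself; the cubic $G$ appears only inside $F$. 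Counting more carefully, $S\circ F$ is spanned by $F$, $Z^{[3]}$, $Z^{[2]}$, $Z$, the $\partial_{x_i}G$, the $X_i$ and $1$. That gives $1+1+1+1+13+13+1=31$, hence length $31$ and embedding dimension $14$.

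\textbf{Step 2: identify $Q_{A_F}(0)$.} The top-degree form of $F$ is $Z^{[4]}$. By \cite[Theorem~1.4]{IarrobinoMaciasMarques}, $Q_{A_F}(0)$ is the graded Gorenstein algebra with dual generator $Z^{[4]}$, namely $\kk[z]/(z^5)$. This algebra is curvilinear, hence smoothable. The remaining symmetric pieces have Hilbert functions $(0,1,1,1,0)$ for $Q(1)$ and $(0,12,12,0)$ for the $G$-part, suitably placed; this part is routine bookkeeping.

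\textbf{Step 3: bound the dimension of the family.} The algebras $A_F$ up to isomorphism are parametrized by cubics $G$ modulo $GL_{13}$, together with the coordinate changes of the ambient space. Each $A_F$ gives points of $\Hilb_{31}(\A^{14})$ supported at a single point, and I take the locus $\mathcal L$ swept out by translating the support and applying all automorphisms of $\kk[[z,x]]$. Its dimension is at least
\[
\dim\{\text{cubics in }13\text{ vars}\}-\dim GL_{13}+14 \;=\; \binom{15}{3}-169+14 \;=\; 455-169+14 \;=\; 300,
\]
minus corrections for stabilizers and for the possibility that different $G$ give isomorphic algebras via non-linear automorphisms that mix $z$ and $x$. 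I would handle that correction by arguing that the isomorphism class of $A_F$ determines the cubic $G$ up to $GL_{13}$ and finitely many choices. Indeed, $\Gr(A_F)$ determines the quotient by the $z$-part, and the cubic is recovered from the degree-$3$ multiplication on the $13$-dimensional piece, exactly as in the standard count for $(1,n,n,1)$ algebras. For safety one can lose a bounded amount (say the dimension of the unipotent automorphism group acting on $F$ by adding lower-order terms, which is already absorbed since $F$ is fixed).

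\textbf{Step 4: compare with the smoothable component, and the main obstacle.} The smoothable component of $\Hilb_{31}(\A^{14})$ has dimension $31\cdot 14=434$. This is larger than $300$, so a naive comparison fails. The obstacle is therefore to get the right comparison, and the method I would use is the tangent space / limit argument of the $(1,n,n,1)$ case. Any smoothable algebra is a limit of reduced schemes, and every point of $\mathcal L$ has a fixed tangent space dimension $\dim\Hom(I,S/I)$. The plan is to show that the locus of local algebras of this type has larger dimension than the locus of local smoothable algebras with the same Hilbert function. The latter, by the standard argument, consists of limits whose punctual part lies in a family of dimension at most $(31-1)\cdot 14-(31-1)=390$ minus the dimension of the support, which is again too weak.

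So instead I would use the dimension of the Gorenstein locus. The family of all $F$ (with all lower-degree terms) gives a locus in $\Hilb_{31}(\A^{14})$ of dimension roughly $\binom{17}{4}$-related. I would check that it exceeds $434$ by computing $\dim$ of the space of dual generators $Z^{[4]}+(\text{cubic})+(\text{lower})$ modulo automorphisms, plus $14$ for the support. If this count exceeds $434$, the locus cannot lie in the smoothable component, so a general member is nonsmoothable. That yields the existence statement of Theorem~\ref{thm:main} nonconstructively.

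The step I expect to be hardest is getting this count to strictly exceed $434$. In particular, I must control the fibres of $F\mapsto A_F$, which have dimension equal to that of the group of automorphisms plus unit rescalings. If the count falls short, I would replace the $GL$-quotient argument by a direct computation of the tangent space at a general point, showing $\dim T<434$ there while the locus is nonempty of dimension $\ge$ that of a nonsmoothable component.
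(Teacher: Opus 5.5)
There is a genuine gap: nonsmoothability, which is the real content of the theorem, is never proved. The error is in Step 3, where you subtract $\dim GL_{13}=169$. This conflates isomorphism classes of algebras with points of $\Hilb^{31}(\A^{14})$. A point of the Hilbert scheme is an ideal, and a coordinate change sends $\Ann(F)$ to a different ideal, so group orbits enlarge the locus rather than cut it down. Your $300$ is therefore a gross underestimate, and the conclusion that ``a naive comparison fails'' is wrong. Step 4 then offers only hedged plans (``I would check that it exceeds $434$'', ``if the count falls short\dots''). The tangent-space fallback cannot succeed here either. Every point of the family lies on a $455$-dimensional irreducible locus (see below), so its tangent space has dimension at least $455>434$.

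The missing idea is that no lower-order terms, automorphisms, or translations are needed. With support fixed at the origin, the assignment $G\mapsto \Ann_P(Z^{[4]}+G)$ is already injective on the open set $U$ of cubics satisfying the rank conditions. If the annihilators agree, the inverse systems agree, so $Z^{[4]}+G'=h\circ(Z^{[4]}+G)$ for some $h\in P$. The degree-$4$ parts force the constant term of $h$ to be $1$. The degree-$3$ parts then give $G'=G+aZ^{[3]}$, where $a$ is the $z$-coefficient of $h$, so $a=0$ and $G'=G$. The summand $Z^{[4]}$ is exactly what kills the scaling ambiguity $\Ann(G)=\Ann(\lambda G)$. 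Because $\dim P\circ F=31$ is constant on $U$, the kernel of contraction $P/\mathfrak n^5\to D_{\leq 4}$ is an ideal subbundle, and the assignment is a morphism. Its image therefore has dimension $\binom{15}{3}=455>434=\dim\overline{\mathcal R}$, so a general member is nonsmoothable.

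Two smaller points, neither of which affects the main argument. First, you assert $\edim A=14$ without argument. It follows because a linear form $az+\sum b_ix_i$ kills $F$ only if $aZ^{[3]}+\sum b_i\,x_i\circ G=0$, which forces all coefficients to vanish by conciseness. Second, your symmetric decomposition in Step 2 is wrong. $H(A)=(1,14,14,1,1)$ splits as $H(Q_A(0))=(1,1,1,1,1)$ and $H(Q_A(1))=(0,13,13,0)$, whereas your pieces are not symmetric about the right centres and total $32$.
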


\section{Inverse systems}

Let
\[
S=\kk[[z,x_1,\ldots,x_{13}]]
\]
with maximal ideal $\m=(z,x_1,\ldots,x_{13})$.  Let
\[
D=\kk_{\mathrm{DP}}[Z,X_1,\ldots,X_{13}]
\]
be the divided-power dual.  Thus $z$ acts on $D$ by contraction with $Z$, and $x_i$ acts by contraction with $X_i$:
\[
z\circ Z^{[a]}=Z^{[a-1]},\qquad
x_i\circ X_i^{[a]}=X_i^{[a-1]},
\]
with the usual convention that divided powers of negative degree are zero.  In characteristic zero one may identify divided powers with ordinary powers up to factorials.

For $F\in D$, set
\[
A_F=S/\Ann_S(F).
\]
By Macaulay duality \cite{IarrobinoKanev}, $A_F$ is Artinian Gorenstein whenever the cyclic inverse system $S\circ F$ is finite-dimensional, and
\[
\operatorname{length}(A_F)=\dim_\kk(S\circ F).
\]

\section{The family}

Let $G\in \kk_{\mathrm{DP}}[X_1,\ldots,X_{13}]_3$ be a general divided-power cubic and set
\[
F=Z^{[4]}+G.
\]
We write $A=A_F$.

\begin{lemma}\label{lem:length}
Suppose that the two catalecticant maps below have rank $13$.  Then the algebra $A$ has length $31$ and embedding dimension $14$.
\end{lemma}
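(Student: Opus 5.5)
The plan is to compute $\dim_\kk(S\circ F)$ degree by degree and then read off the embedding dimension from the degree-one piece of the inverse system. The catalecticant maps are presumably $C_1\colon S_1'=\langle x_1,\ldots,x_{13}\rangle\to D_2$, $\ell\mapsto \ell\circ G$, and $C_2\colon S_2'\to D_1$, $q\mapsto q\circ G$, where $S'=\kk[[x_1,\ldots,x_{13}]]$. Rank $13$ means the first derivatives of $G$ span a $13$-dimensional space of quadrics, and the second derivatives span all of $\langle X_1,\ldots,X_{13}\rangle$. For a general cubic in $13$ variables both conditions hold.

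The key structural point is that $F$ is a sum of forms in disjoint variables. If $a+|\beta|\ge 1$ with $a\geq 1$ and $\beta\neq 0$, the monomial $z^a x^\beta$ kills both $Z^{[4]}$ and $G$, so it kills $F$. Hence
\[
S\circ F=\kk F+\langle z^a\circ Z^{[4]}:1\le a\le 4\rangle+\langle x^\beta\circ G:\beta\neq0\rangle .
\]
The summands are $\kk F$, then $\langle Z^{[3]},Z^{[2]},Z,1\rangle$, then $\langle x^\beta\circ G\rangle = C_1(S_1')+C_2(S_2')+\kk\cdot 1$. The constant $1$ appears in both of the last two spaces. I will check that the sum is
\[
\kk F\oplus\langle Z^{[3]},Z^{[2]},Z\rangle\oplus C_1(S_1')\oplus C_2(S_2')\oplus\kk\cdot 1 .
\]
This holds because the pieces live in distinct bidegrees in $(Z,X)$, except that $F$ is inhomogeneous; $F$ is independent of the rest since it is the only element with a nonzero degree-$4$ component. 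Counting dimensions gives $1+3+13+13+1=31$. By Macaulay duality (the cited statement $\operatorname{length}(A_F)=\dim_\kk(S\circ F)$), $A$ has length $31$.

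For the embedding dimension, I use $\edim A=\dim_\kk \m/(\m^2+\Ann F)$. Equivalently, it is the dimension of the space of linear forms $\ell\in S_1$ that are nonzero modulo $\Ann(F)+\m^2$. The cleanest route is to show that no nonzero linear form $\ell=cz+\sum b_ix_i$ lies in $\Ann(F)+\m^2$. Suppose $\ell\circ F=q\circ F$ with $q\in\m^2$. The left side is $cZ^{[3]}+\sum b_i(x_i\circ G)$. By the description above, $q\circ F$ lies in $\langle Z^{[2]},Z,1\rangle+C_2(S_2')+\kk$, which has components only in degrees $\le 2$ in $Z$ and degrees $\le1$ in $X$. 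Comparing the $Z^{[3]}$ components forces $c=0$. Comparing the degree-$2$ parts in $X$ gives $\sum b_i(x_i\circ G)=0$, and injectivity of $C_1$ (rank $13$) forces $b=0$. So $\m/\m^2$ injects into $\m_A/\m_A^2$, and $\edim A=14$. Alternatively, I could note that the Hilbert function of $\Gr A$ is $(1,14,14,1,1)$, via the symmetric decomposition $Q(0)=(1,1,1,1,1)$ and $Q(1)=(0,13,13,0)$.

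The only step needing care is this last one: it handles the interaction of the inhomogeneous $F$ with $\m^2$ and makes the independence of the summands precise. The bigrading argument should make it routine.
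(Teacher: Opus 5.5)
Your proof is correct. The length computation follows the paper's: same spanning set ($F$, $Z^{[3]},Z^{[2]},Z,1$, and the first and second derivatives of $G$) and the same count $1+3+13+13+1=31$. Your bidegree bookkeeping just makes explicit the paper's remark that ``these vectors are independent.''

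You differ on the embedding dimension, and your version is the better one. The paper only checks that no nonzero linear form $\ell$ satisfies $\ell\circ F=0$. Because $F$ is inhomogeneous, $\Ann_S(F)$ is not a homogeneous ideal, so this alone does not give $\Ann_S(F)\subseteq\m^2$. For example, $F=X^{[2]}+Y$ in two variables has no linear annihilator. Yet $y-x^2\in\Ann(F)$, and $A_F\cong\kk[[x]]/(x^3)$ has embedding dimension $1$.

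You instead show that no nonzero linear form lies in $\Ann(F)+\m^2$. The key input is that $\m^2\circ F\subseteq\langle Z^{[2]},Z,1,X_1,\ldots,X_{13}\rangle$ has no $Z^{[3]}$-component and no component that is quadratic in the $X$'s. This is exactly what is needed to conclude $\edim A=14$, so your argument closes a small gap in the paper's.

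Your alternative via the Hilbert function $(1,14,14,1,1)$ would need a separate justification that $H(Q_A(1))=(0,13,13,0)$. The direct argument is therefore the one to keep.
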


\begin{proof}
Consider the two catalecticant maps
\[
\kk\{x_1,\ldots,x_{13}\}\longrightarrow
\kk_{\mathrm{DP}}[X_1,\ldots,X_{13}]_2,\qquad
\theta\longmapsto \theta\circ G,
\]
and
\[
\kk\{x_ix_j:1\leq i\leq j\leq 13\}\longrightarrow
\kk_{\mathrm{DP}}[X_1,\ldots,X_{13}]_1,\qquad
\Theta\longmapsto \Theta\circ G.
\]
The condition that these maps have rank $13$ is open and nonempty.  For example, it is satisfied by
\[
G_0=X_1^{[3]}+\cdots+X_{13}^{[3]}.
\]
Hence it is satisfied by a general $G$.

For such a $G$, the cyclic inverse system $S\circ F$ is spanned by
\[
F,\quad Z^{[3]},\quad x_i\circ G,\quad
Z^{[2]},\quad x_ix_j\circ G,\quad
Z,\quad 1.
\]
These vectors are independent, except for the evident fact that the second derivatives $x_ix_j\circ G$ span a $13$-dimensional space and the third derivatives of $G$ span the one-dimensional space $\kk\cdot 1$.  Thus
\[
\dim_\kk(S\circ F)
=1+(1+13)+(1+13)+1+1
=31.
\]
Therefore $\operatorname{length}(A)=31$.

It remains to compute the embedding dimension.  If a linear form
\[
\ell=a z+\sum_{i=1}^{13} b_i x_i
\]
annihilates $F$, then
\[
\ell\circ F=aZ^{[3]}+\sum_{i=1}^{13} b_i(x_i\circ G)=0.
\]
The term $Z^{[3]}$ is independent from the $X$-quadrics $x_i\circ G$, and those $13$ quadrics are linearly independent by the first rank condition.  Hence $a=b_1=\cdots=b_{13}=0$.  Thus $\Ann_S(F)$ contains no nonzero linear form, and $\edim(A)=14$.
\end{proof}

\begin{lemma}\label{lem:q0}
For general $G$ as above, the first symmetric quotient $Q_A(0)$ is smoothable.
\end{lemma}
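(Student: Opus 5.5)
The plan is to identify $Q_A(0)$ explicitly and observe that it is curvilinear. By \cite[Theorem~1.4]{IarrobinoMaciasMarques}, quoted in the introduction, $Q_A(0)$ is the graded Artinian Gorenstein algebra whose dual generator is the top-degree form of $F$. Since $G$ is a cubic, the degree-four part of $F=Z^{[4]}+G$ is $f_4=Z^{[4]}$. The socle degree of $A$ is also $4$: this follows from Lemma~\ref{lem:length}, or directly, because $z^4\circ F=1\neq 0$ while every monomial of degree $5$ kills $F$. Hence $Q_A(0)\cong S/\Ann_S(Z^{[4]})$, viewed as a graded algebra over the polynomial ring $\kk[z,x_1,\ldots,x_{13}]$.

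Next we compute this annihilator. Each $x_i$ kills $Z^{[4]}$, and $z^a\circ Z^{[4]}=Z^{[4-a]}$, which is nonzero exactly when $a\le 4$. So
\[
\Ann(Z^{[4]})=(x_1,\ldots,x_{13},z^5),
\]
and $Q_A(0)\cong \kk[z]/(z^5)$, with Hilbert function $(1,1,1,1,1)$. As a sanity check, the remaining part of $H(A)=(1,14,14,1,1)$ is carried by $Q_A(1)$, with dual generator coming from $G$ and Hilbert function $(0,13,13,0)$. This fits the symmetric decomposition, but we do not need it.

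Finally, $\kk[z]/(z^5)$ is smoothable. The family $\kk[z]/(z^5-t)$ over $\A^1_t$ is finite and flat of rank $5$. Its fiber over $t=0$ is our algebra. For $t\neq 0$ the polynomial $z^5-t$ has five distinct roots whenever $\operatorname{char}\kk\neq 5$, so the general fiber is $\kk^5$. In characteristic $5$ we use $z^5-tz$ instead: its roots are $0$ together with the fourth roots of $t$, which are distinct because the derivative of $z^5-tz$ is the nonzero constant $-t$. In either case $\kk[z]/(z^5)$ is a flat limit of reduced algebras, so it is smoothable.

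The only step needing care is the identification of $Q_A(0)$ with the algebra dual to $f_j$, and that is supplied by the cited theorem. The rest is immediate, and the genericity of $G$ plays no role beyond ensuring that $\deg G<4$.
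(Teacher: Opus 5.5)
Your proof is correct and follows the paper's argument. Both read off the top-degree part $F_4=Z^{[4]}$, invoke \cite[Theorem~1.4]{IarrobinoMaciasMarques} to get $Q_A(0)\cong\kk[z]/(z^5)$, and conclude by curvilinearity; your explicit smoothings $z^5-t$ (and $z^5-tz$ in characteristic $5$) simply spell out the step the paper states as ``curvilinear, hence smoothable.''
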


\begin{proof}
The socle degree of $A$ is $4$, and the highest homogeneous part of the dual generator $F$ is $F_4=Z^{[4]}$.  By the inverse-system description of the first symmetric quotient in \cite[Theorem~1.4]{IarrobinoMaciasMarques}, the graded dual of $Q_A(0)=C_A(0)/C_A(1)$ is the cyclic module generated by $F_4$.  Equivalently,
\[
Q_A(0)\cong \Apolar(Z^{[4]})
\cong \kk[[z,x_1,\ldots,x_{13}]]/(x_1,\ldots,x_{13},z^5)
\cong \kk[[z]]/(z^5).
\]
This algebra is curvilinear, hence smoothable.
\end{proof}

\section{Nonsmoothability}

Let $U$ be the nonempty open set of cubics satisfying the two rank conditions in the proof of Lemma \ref{lem:length}.  Then
\[
\dim U=\dim_\kk \kk_{\mathrm{DP}}[X_1,\ldots,X_{13}]_3
=\binom{15}{3}
=455.
\]
Put $P=\kk[z,x_1,\ldots,x_{13}]$ and $\mathfrak n=(z,x_1,\ldots,x_{13})$.

\begin{lemma}\label{lem:hilbertmorphism}
The assignment given by
\[
\Phi:U\longrightarrow \Hilb^{31}(\A^{14}),
\qquad
G\longmapsto \Ann_P(Z^{[4]}+G)
\]
is a morphism.
\end{lemma}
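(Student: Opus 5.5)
We will construct $\Phi$ from a flat family of ideals over $U$ and then invoke the universal property of the Hilbert scheme.

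First we describe the ideal explicitly. Because $A$ has socle degree $4$, $\mathfrak n^5\subseteq \Ann_P(F)$ for every $F=Z^{[4]}+G$. Hence $\Ann_P(F)$ is determined by its image in the finite-dimensional space $P/\mathfrak n^5$, which is dual to $D_{\le 4}$. Concretely, $\Ann_P(F)/\mathfrak n^5$ is the kernel of the $\kk$-linear map
\[
c_F: P/\mathfrak n^5\longrightarrow D_{\le 4},\qquad \theta\longmapsto \theta\circ F.
\]
The entries of $c_F$ in the monomial bases are affine-linear in the $455$ coefficients of $G$. By Lemma~\ref{lem:length}, $c_F$ has rank exactly $31$ for every $G\in U$. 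Over $U$ (with its reduced structure as an open subscheme of $\A^{455}$) we therefore get a morphism of trivial vector bundles
\[
c:\mathcal O_U\otimes P/\mathfrak n^5\to \mathcal O_U\otimes D_{\le4}
\]
of constant rank $31$. Since $U$ is reduced, the kernel, image and cokernel of $c$ are locally free, and their formation commutes with base change.

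Next we define the family. Let $\mathcal I\subseteq \mathcal O_U\otimes P$ be the preimage of $\ker c$ under the projection to $\mathcal O_U\otimes P/\mathfrak n^5$. We need $\mathcal I$ to be an ideal. This holds because $\ker c$ is stable under multiplication by $P$: the fibrewise kernel is an annihilator, and stability is a closed condition that we can check locally on the locally free subbundle, for instance by noting that the multiplication maps preserve $\ker c$ at every point of the reduced base. The quotient $\mathcal O_U\otimes P/\mathcal I\cong \operatorname{coker}$ of the inclusion of $\ker c$ is locally free of rank $31$. So $\operatorname{Spec}$ of this quotient is a closed subscheme of $\A^{14}\times U$, finite and flat of degree $31$ over $U$. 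Its fibre over $G$ is $P/\Ann_P(Z^{[4]}+G)$, by the base-change compatibility of the kernel.

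Finally, the universal property of $\Hilb^{31}(\A^{14})$ gives a morphism $U\to\Hilb^{31}(\A^{14})$, and on $\kk$-points it sends $G$ to $\Ann_P(Z^{[4]}+G)$. This is $\Phi$.

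The main obstacle is the constant-rank and base-change step. Over a possibly non-reduced base, constant rank would not guarantee that the kernel is a subbundle. We avoid this by working on $U$ as an open subscheme of affine space, which is reduced. There, constant rank on closed points implies that the Fitting-ideal description gives a locally split map. Alternatively, the rank condition can be phrased as the nonvanishing of some $31\times31$ minor on an open cover, which makes local freeness explicit.
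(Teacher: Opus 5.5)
Your proposal is correct and follows the paper's proof essentially step for step: you truncate to $P/\mathfrak n^5$, use that the contraction map over the reduced base $U$ has constant rank $31$ to get a kernel subbundle, check that this kernel is an ideal, and take the rank-$31$ quotient as a flat family. The one small difference is the ideal step: the paper uses the identity $(qp)\circ\mathcal F=q\circ(p\circ\mathcal F)$ directly on sections, which is shorter than your pointwise argument and does not need reducedness.
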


\begin{proof}
Since all dual generators under consideration have degree at most $4$, each annihilator contains $\mathfrak n^5$.  It is therefore enough to work in the finite-dimensional algebra $P/\mathfrak n^5$.

Let $\mathcal G$ be the universal cubic over $U$ and set $\mathcal F=Z^{[4]}+\mathcal G$.  Contraction with $\mathcal F$ gives an $\mathcal O_U$-linear map of vector bundles
\[
\mu:\mathcal O_U\otimes_\kk P/\mathfrak n^5
\longrightarrow
\mathcal O_U\otimes_\kk D_{\leq 4},
\qquad
p\longmapsto p\circ \mathcal F.
\]
By Lemma \ref{lem:length}, for every $G\in U$ one has
\[
\dim_\kk P\circ (Z^{[4]}+G)=31.
\]
Thus $\mu$ has constant rank $31$ on $U$.  Consequently $\ker\mu$ is a subbundle of $\mathcal O_U\otimes_\kk P/\mathfrak n^5$.  Moreover $\ker\mu$ is closed under multiplication by sections of $\mathcal O_U\otimes_\kk P/\mathfrak n^5$: if $p\circ\mathcal F=0$, then for every $q$ one has
\[
(qp)\circ\mathcal F=q\circ(p\circ\mathcal F)=0.
\]
Thus $\ker\mu$ is an ideal subbundle, and
\[
\mathcal B=(\mathcal O_U\otimes_\kk P/\mathfrak n^5)/\ker\mu
\]
is a locally free $\mathcal O_U$-algebra of rank $31$.  This flat family of length-$31$ quotient algebras of $P$ gives the desired morphism to the Hilbert scheme.
\end{proof}

\begin{lemma}\label{lem:injective}
The morphism $\Phi$ is injective.  In particular, its image has dimension $455$.
\end{lemma}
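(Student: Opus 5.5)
Injectivity follows from Macaulay duality: the ideal $I=\Ann_P(Z^{[4]}+G)$ determines its inverse system $I^\perp=P\circ F$, and from that we recover $G$. Suppose $G,G'\in U$ satisfy $\Ann_P(Z^{[4]}+G)=\Ann_P(Z^{[4]}+G')$, and write $F=Z^{[4]}+G$, $F'=Z^{[4]}+G'$. By Macaulay duality \cite{IarrobinoKanev}, the perpendicular of the annihilator inside $D_{\le 4}$ is the cyclic module $P\circ F$, so $P\circ F=P\circ F'$. In particular $F'\in P\circ F$, so $F'=p\circ F$ for some $p\in P$.

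The next step is to pin down $p$ using degrees. Since $F'$ has degree $4$ and $p\circ F$ can have a degree-$4$ part only from the constant term $c$ of $p$, comparing top-degree parts gives $Z^{[4]}=cZ^{[4]}$, hence $c=1$. Write $p=1+p_1+p_{\ge 2}$ with $p_1$ linear. Then
\[
F'=F+p_1\circ F+(\text{terms of degree}\le 2).
\]
In degree $3$ we need $G'=G+p_1\circ F_4=G+aZ^{[3]}$, where $p_1=az+\sum b_ix_i$; note that $\sum b_ix_i\circ Z^{[4]}=0$. Since $G$ and $G'$ lie in $\kk_{\mathrm{DP}}[X_1,\ldots,X_{13}]$, the $Z^{[3]}$ coefficient gives $a=0$, and therefore $G'=G$. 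This is the whole argument. It does not even use the rank conditions beyond the fact that $\Phi$ is defined on $U$. The only care needed is the degree bookkeeping: the contribution $p_1\circ G$ has degree $2$, so it does not affect the degree-$3$ comparison.

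For the dimension claim, $\Phi$ is an injective morphism of varieties from the irreducible $455$-dimensional $U$ (open in affine space). Its image is constructible, and its closure is irreducible of dimension $\dim U=455$, because generic fibers are finite (here singletons) by the fiber-dimension theorem. I expect the main obstacle to be justifying the duality step $\Ann_P(F)^\perp=P\circ F$ for $\Ann_P$ in the polynomial ring $P$ rather than in $S$. This is harmless: everything lives in $P/\mathfrak n^5$, which pairs perfectly with $D_{\le 4}$, and the double-perpendicular statement is just finite-dimensional linear algebra.
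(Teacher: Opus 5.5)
Your proposal is correct and follows essentially the same route as the paper. Both arguments use Macaulay duality to get $P\circ F=P\circ F'$, write $F'=p\circ F$, read off $c=1$ from degree four and $a=0$ from the $Z^{[3]}$-coefficient in degree three, and then apply the fiber-dimension theorem to get the image dimension. Your added justification of $\Ann_P(F)^\perp=P\circ F$ via the perfect pairing of $P/\mathfrak n^5$ with $D_{\le 4}$ is a valid elaboration of the duality step that the paper simply cites.
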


\begin{proof}
Suppose that
\[
\Ann_P(Z^{[4]}+G)=\Ann_P(Z^{[4]}+G').
\]
By Macaulay duality, for any dual generator $F$ the cyclic inverse system $P\circ F$ is the Matlis dual of $P/\Ann_P(F)$ inside the divided-power module; equivalently, the annihilator determines the cyclic inverse system \cite[Chapter~1]{IarrobinoKanev}.  Hence the corresponding inverse systems are equal:
\[
P\circ (Z^{[4]}+G)=P\circ (Z^{[4]}+G').
\]
In particular, $Z^{[4]}+G'$ belongs to $P\circ (Z^{[4]}+G)$, so there is some $h\in P$ such that
\[
h\circ (Z^{[4]}+G)=Z^{[4]}+G'.
\]
Write $h=c+h_1+h_{\geq 2}$ according to degree, and write the linear part as
\[
h_1=a z+\sum_{i=1}^{13} b_i x_i.
\]
Taking the homogeneous degree-four part gives $c=1$.  The homogeneous degree-three part of $h\circ (Z^{[4]}+G)$ is then
\[
G+aZ^{[3]};
\]
indeed, the constant term of $h$ contributes $G$, the term $az$ contributes $aZ^{[3]}$, the terms $b_i x_i$ contribute only quadrics from $G$, and $h_{\geq 2}$ contributes terms of degree at most $2$.  Comparing degree-three terms gives
\[
G'=G+aZ^{[3]},
\]
and since both $G$ and $G'$ involve only the variables $X_1,\ldots,X_{13}$, we get $a=0$, and therefore $G'=G$.  Thus $\Phi$ is injective.

The assertion about the dimension of the image follows from the fiber dimension theorem.
\end{proof}

\begin{proof}[Proof of Theorem \ref{thm:main}]
By Lemmas \ref{lem:length} and \ref{lem:q0}, the algebras defined above have length $31$, embedding dimension $14$, and smoothable first symmetric quotient $Q_A(0)$.

It remains to show that a general member is not smoothable.  Let $\mathcal R\subset \Hilb^{31}(\A^{14})$ be the open locus parametrizing reduced subschemes, or equivalently unordered configurations of $31$ distinct points in $\A^{14}$.  This locus is irreducible of dimension
\[
31\cdot 14=434.
\]
Set
\[
\mathcal H_{\mathrm{sm}}=\overline{\mathcal R}
\]
inside $\Hilb^{31}(\A^{14})$.  Thus $\mathcal H_{\mathrm{sm}}$ is a closed irreducible subset of dimension $434$, and a length-$31$ subscheme of $\A^{14}$ is smoothable precisely when its Hilbert-scheme point lies in $\mathcal H_{\mathrm{sm}}$.

By Lemma \ref{lem:hilbertmorphism}, $\Phi$ is a morphism, and by Lemma \ref{lem:injective} its image has dimension $455$.  Since $U$ is irreducible, the Zariski closure $\overline{\Phi(U)}$ is irreducible of dimension $455$.  If every point of $\Phi(U)$ were smoothable, then $\Phi(U)\subset \mathcal H_{\mathrm{sm}}$, and hence $\overline{\Phi(U)}\subset \mathcal H_{\mathrm{sm}}$, contradicting
\[
455>434,
\]
because a closed subset of $\mathcal H_{\mathrm{sm}}$ has dimension at most $434$.  Therefore
\[
\Phi^{-1}(\mathcal H_{\mathrm{sm}})
\]
is a proper closed subset of $U$.  For a general $G\in U$, the algebra
\[
A=\kk[[z,x_1,\ldots,x_{13}]]/\Ann_S(Z^{[4]}+G)
\]
is nonsmoothable, while $Q_A(0)\cong \kk[[z]]/(z^5)$ is smoothable.
\end{proof}

\begin{remark}
The use of divided powers is only to avoid restrictions on the characteristic of $\kk$.  In characteristic zero, the same family may be written with ordinary polynomials after the usual normalization of powers.
\end{remark}

\begin{remark}
The proof gives an existence result by dimension count.  It does not claim that length $31$ is minimal, nor does it single out a preferred explicit cubic $G$.
\end{remark}

\bibliographystyle{alpha}
\bibliography{nonsmoothable_Q0_smoothable}

\end{document}